\theoremstyle{plain}
\newtheorem{thm}{Theorem}
\newtheorem{nota}[thm]{Notation}
\newtheorem{rem}[thm]{Remark}
\newtheorem{defin}[thm]{Definition}
\newcommand{\R}{\mathbb{R}}
\newcommand{\N}{\mathbb{N}}
\def\multiset#1#2{\ensuremath{\left(\kern-.2em\left(\genfrac{}{}{0pt}{}{#1}{#2}\right)\kern-.2em\right)}}
\begin{document}

\title{On graphlike $k$-dissimilarity vectors}
\author{Agnese Baldisserri \  \ \  \ \ Elena Rubei}
\date{}
\maketitle

\def\thefootnote{}
\footnotetext{ \hspace*{-0.36cm}
{\bf 2010 Mathematical Subject Classification: 05C05, 05C12, 05C22} 

{\bf Key words: weighted graphs, dissimilarity families} }

\begin{abstract}
Let  ${\cal G}=(G,w)$ be a positive-weighted simple finite graph,
that is, let $G$ be a simple finite graph endowed with a function $w$
from the set of the edges of $G$ to the set of the positive real numbers.
For any subgraph $G'$ of  $G$, we define  $w(G')$ to be the sum of the weights 
of the edges of $G'$. 
For any  $i_1,..., i_k $ vertices of $G$, 
let $D_{\{i_1,.... i_k\}} ({\cal G})$ be
the minimum of the weights of the subgraphs of $G$ connecting $i_1,..., i_k$.
The  $D_{\{i_1,.... i_k\}} ({\cal G})$ are called 
$k$-weights of  ${\cal G}$. 

Given a family of positive real numbers parametrized by the $k$-subsets of $
\{1,..., n\}$, 
$\{D_I\}_{I  \in {\{1,...,n\} \choose k}}$, we can wonder when 
there exist a weighted graph ${\cal G}$ (or a weighted tree) 
and an  $n$-subset $\{1,..., n\}$ of  the set of its vertices such that 
$D_I ({\cal G}) =D_I $ for any $I  \in {\{1,...,n\} \choose k}$.
In this paper we study  this problem in the case $k=n-1$.

\end{abstract}

\section{Introduction}

For any graph $G$, let $E(G)$, $V(G)$ and $L(G)$ 
 be respectively the set of the edges,   
the set of the vertices and  the set of the leaves of $G$.
A {\bf weighted graph} ${\cal G}=(G,w)$ is a graph $G$ 
endowed with a function $w: E(G) \rightarrow \R$. 
For any edge $e$, the real number $w(e)$ is called the weight of the edge. If 
all the weights are nonnegative (resp. positive), 
we say that the graph is {\bf 
nonnegative-weighted} (resp. {\bf positive-weighted}),
 if all the weights are nonnegative and the ones of 
the internal edges are positive,
 we say that the graph is {\bf internal-positive-weighted}.
Throughout the paper we will consider only simple finite graphs.

For any subgraph $G'$ of  $G$, we define  $w(G')$ to be the sum of the weights 
of the edges of $G'$. 

\begin{defin}
Let ${\cal G}=(G,w) $ be a nonnegative-weighted graph. 
For any distinct $ i_1, .....,i_k \in V(G)$,
 we define $$ D_{\{i_1,...., i_k\}}({\cal G}) 
= min 
\{w(R) | \; R \text{ a connected subgraph of } G  \text{ such that } V(R) \ni 
i_1,...., i_k\}.$$ More simply, we denote 
$D_{\{i_1,...., i_k\}}({\cal G})$ by
$D_{i_1,...., i_k}({\cal G})$ for any order of $i_1,..., i_k$.  
We call  the  $ D_{i_1,...., i_k}({\cal G})$ 
the {\bf $k$-weights} of ${\cal G}$ and
 we call  a $k$-weight of ${\cal G}$ for some $k$  a {\bf multiweight}
 of ${\cal G}$.
\end{defin}


 If $S $ is a subset of $V(G)$ and  we order in some way the $k$-subsets of 
$ S$ (for instance, we order $S$ in some way 
 and then we order the $k$-subsets of $ S$
 in the lexicographic order with respect the  order of $S$),
 the $k$-weights with this order give
a vector in $\mathbb{R}^{ \sharp S \choose k}$. This vector is called 
$k${\bf -dissimilarity vector} of $({\cal G}, S)$.
Equivalently, if we don't fix any order, we can speak 
of the {\bf family of the $k$-weights}  of $({\cal G}, S)$.

We can wonder when a family of real numbers is the family of the $k$-weights
of some weighted graph and of some subset of the set of its vertices.
If $S$ is a finite set, $k \in \N$ and $k < \sharp S$,
we say that a family of positive real numbers
 $\{D_{I}\}_{I \in {S \choose k}}$  is {\bf p-graphlike} (resp. nn-graphlike, 
ip-graphlike)) if 
there exist a positive-weighted (resp. nonnegative-weighted, 
internal-positive-weighted)  graph ${\cal G}=(G,w)$ and a subset $S$ 
of the set of its vertices such that $ D_{I}({\cal G}) = D_{I}$  for any 
$ I $ $k$-subset of $ S$.
If the graph is a positive-weighted tree, we say that the family
 is {\bf p-treelike};
if the graph is a positive-weighted tree and $S \subset L(G)$,
we say that the family
 is {\bf p-l-treelike} (and analogously for nonnegative-weighted
or positive-internal-weighted trees).

The first contribute to the 
characterization of the graphlike families of numbers is due to Hakimi and 
Yau; in 1965, they observed that a family of positive real numbers
$\{D_{I}\}_{\{I \}\in {\{1,...,n\} \choose 2}}$
is p-graphlike if and only if the $D_I$ satisfy the 
triangle inequalities.

In the same years, also a criterion for a metric on
a finite set to be nn-treelike
was established, see \cite{B}, \cite{SimP}, \cite{St}: 


\begin{thm} 
Let
$\{D_{I}\}_{I \in {\{1,...,n\} \choose 2}}$ be a set of positive real numbers 
satisfying the triangle inequalities.
It is p-treelike (or nn-l-treelike) 
 if and only if, for all $i,j,k,h  \in \{1,...,n\}$,
the maximum of $$\{D_{i,j} + D_{k,h},D_{i,k} + D_{j,h},D_{i,h} + D_{k,j}
 \}$$ is attained at least twice. 
\end{thm}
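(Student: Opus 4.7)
The plan is to prove both implications, with the converse by induction on $n$.

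For the forward direction, assume $\{D_I\}$ is realized by a positive-weighted tree $({\cal T}, w)$ with an $n$-subset $S\subseteq V({\cal T})$. Fix any four elements $i,j,k,h \in S$ and consider the subtree ${\cal T}'$ of ${\cal T}$ spanned by them. Up to relabeling, the combinatorial type of ${\cal T}'$ with respect to the four marked vertices is a ``quartet'' with a (possibly degenerate) central path $\pi$ separating the pair $\{i,j\}$ from the pair $\{k,h\}$. Expressing each $D_{ab}$ as the sum of edge-weights along the unique $a$-$b$ path in ${\cal T}$ yields
\[
D_{ij}+D_{kh} \;\leq\; D_{ik}+D_{jh} \;=\; D_{ih}+D_{jk} \;=\; D_{ij}+D_{kh}+2\,w(\pi),
\]
so the maximum of the three sums is attained at least twice, as required. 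The nn-l-treelike version is identical.

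For the converse, I induct on $n$. The base $n\leq 3$ is immediate: build a tripod with central vertex $v$, attaching each $i$ by an edge of length $\alpha_i := \frac{1}{2}(D_{ij}+D_{ik}-D_{jk})$, the triangle inequalities guaranteeing $\alpha_i>0$. For the inductive step, the strategy is to identify a ``cherry'', i.e., two indices $p,q$ that should be sibling leaves in the reconstructed tree, by choosing $(p,q)$ so as to maximize $\frac{1}{2}(D_{pr}+D_{qr}-D_{pq})$ over some fixed reference $r$. Using the four-point condition one shows that the number $\alpha_p := \frac{1}{2}(D_{pq}+D_{pr}-D_{qr})$ is in fact independent of the choice of $r \in \{1,\dots,n\}\setminus\{p,q\}$, and likewise for $\alpha_q := D_{pq}-\alpha_p$. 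Then one forms a reduced family $\{D'_I\}$ on $\{1,\dots,n\}\setminus\{q\}$, defined by $D'_{pi} := D_{pi}-\alpha_p$ for $i\neq q$ and $D'_{ij}:=D_{ij}$ otherwise; one applies the induction hypothesis to obtain a tree ${\cal T}'$ realizing $\{D'_I\}$, and finally subdivides ${\cal T}'$ near $p$ to reattach $p$ and $q$ as cherry leaves with branch lengths $\alpha_p$ and $\alpha_q$.

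The main obstacle is verifying that the reduced family $\{D'_I\}$ still satisfies the triangle inequalities and the four-point condition: one splits into cases according to how many of $p,q$ appear in the chosen quadruple. The case where neither appears is automatic; the case where only $p$ appears reduces, after an $\alpha_p$-cancellation, to the four-point condition on the quadruple obtained by replacing $p$ with $q$ in the original family; the subtler case, where the construction forces a comparison of sums involving $D_{pq}$ together with two other entries, is handled by combining the cherry-maximality of $(p,q)$ with the original four-point condition applied to carefully chosen $5$-tuples. A final routine check that reattaching the cherry restores $D_{pi}$, $D_{qi}$ and $D_{pq}$ follows directly from the definitions of $\alpha_p$ and $\alpha_q$.
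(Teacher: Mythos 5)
First, note that the paper does not prove this statement at all: it is quoted as a known result of Buneman, Simoes Pereira and Stotskii (\cite{B}, \cite{SimP}, \cite{St}), so there is no in-paper argument to compare yours with. Your forward direction is complete and correct: reading off the quartet topology of the spanning subtree gives $D_{ik}+D_{jh}=D_{ih}+D_{jk}=D_{ij}+D_{kh}+2w(\pi)$, which is exactly the four-point condition. Your converse follows the classical cherry-reduction induction, which is the right strategy (essentially Buneman's), but as written it has genuine gaps at precisely the two points where all the work of that proof is concentrated. (a) You assert that, for the pair $(p,q)$ maximizing the Gromov product $\frac{1}{2}(D_{pr}+D_{qr}-D_{pq})$ with respect to a fixed reference $r$, the quantity $\alpha_p=\frac{1}{2}(D_{pq}+D_{ps}-D_{qs})$ is independent of $s$. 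This is equivalent to saying that $D_{pq}+D_{st}\le D_{ps}+D_{qt}$ for all $s,t$, i.e.\ that $\{p,q\}$ sits as a cherry in every quartet; deducing this from the maximality of the Gromov product together with the four-point condition is a nontrivial lemma that must actually be carried out, and ``using the four-point condition one shows'' is a placeholder, not a proof. (b) The verification that the reduced family $\{D'_I\}$ still satisfies the four-point condition in the quadruples containing $p$ is likewise only described (``carefully chosen $5$-tuples''); this is the other half of the crux and is left unexecuted.

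Two smaller but real issues. The triangle inequalities in the hypothesis are not strict, so your base case only gives $\alpha_i\ge 0$, not $\alpha_i>0$; a zero pendant edge is acceptable for the nn-l-treelike conclusion but must be contracted (identifying $i$ with the central vertex, which is legitimate since p-treelike only requires $1,\dots,n$ to be vertices, not leaves) to obtain a positive-weighted tree. Similarly, the reduced entries $D'_{pi}=D_{pi}-\alpha_p=\frac{1}{2}(D_{pi}+D_{qi}-D_{pq})$ can vanish, so the inductive hypothesis, stated for families of positive reals, may not apply verbatim; you need either to strengthen the statement being inducted on so as to allow nonnegative entries realized at vertices, or to treat the degenerate case separately. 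None of this is fatal --- the architecture is the standard correct one --- but as submitted the converse is an outline of Buneman's argument rather than a proof.
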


In terms of tropical geometry, the theorem above can be formulated by saying 
that 
the set of the $2$-dissimilarity vectors of weighted trees with $n$ leaves
and such that  the internal 
edges have  negative weights is the tropical Grassmanian $ {\cal G}_{2,n}$
(see \cite{SS2}).

For $k=2$ also  
the case of not necessarily nonnegative weights has been studied.
For any weighted graph ${\cal G}=(G,w)$ and for any $i,j \in V(G)$, we define
$D_{i,j}({\cal G})$ to be the minimum of $w(p)$ 
for $p$ a simple path joining $i$ and $j$. 
Again, we call such numbers ``$2$-weights''.
In 1972 Hakimi and Patrinos proved that a family of real numbers
$\{D_{I}\}_{\{I \}\in {\{1,...,n\} \choose 2}}$
  is always the family of the $2$-weights
of some weighted graph and some subset $\{1,...., n\}$ of its vertices.

In \cite{B-S}, Bandelt and Steel proved a result, analogous to
 Buneman's one, for general weighted trees, precisely they
 proved that, 
for any set of real numbers $\{D_{I}\}_{\{I \}\in 
{\{1,...,n\} \choose 2}}$,
 there exists a weighted tree ${\cal T}$ with leaves $1,...,n$
such that $ D_{I} ({\cal T})= D_{I}$  for any $I$ $2$-subset of
 $\{1,...,n\}$  if and only
 if, for any $a,b,c,d \in  \{1,...,n\}$,  we have that at least two among 
 $ D_{a,b} + D_{c,d},\;\;D_{a,c} + D_{b,d},\;\; D_{a,d} + D_{b,c}$
are equal.

For higher $k$ the literature is more recent.
In 2004, Pachter and Speyer proved the following theorem (see \cite{P-S}).

\begin{thm} {\bf (Pachter-Speyer)}. Let $ k ,n  \in \mathbb{N}$ with
$ n \geq 2k-1$ and $ k \geq 3$.  A positive-weighted tree
 ${\cal T}$ with leaves $1,...,n$ and no vertices of degree 2
is determined by the values $D_I({\cal T})$, where $ I $ varies in  
${\{1,...,n\} \choose k }$.
\end{thm}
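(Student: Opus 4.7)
The strategy is to reduce the problem to the classical case $k=2$. By the Buneman--Zaretskii--Simões-Pereira result (Theorem~1), a positive-weighted tree without vertices of degree~$2$ is uniquely determined by the pairwise leaf distances $\{D_{ab}(\mathcal{T})\}$. It therefore suffices to show that, under the hypothesis $n \geq 2k-1$ with $k \geq 3$, the collection of $k$-weights $\{D_I(\mathcal{T})\}_{I \in \binom{\{1,\ldots,n\}}{k}}$ determines the $\binom{n}{2}$ pairwise distances $D_{ab}(\mathcal{T})$.

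The prototype of such a reduction is the three-point identity in trees: for any leaves $a,b,c$ the minimal Steiner subtree on $\{a,b,c\}$ is a tripod, so
\begin{equation*}
2 D_{abc}(\mathcal{T}) \;=\; D_{ab}(\mathcal{T}) + D_{ac}(\mathcal{T}) + D_{bc}(\mathcal{T}).
\end{equation*}
Applying this to the four triples on $\{a,b,m,n\}$ and taking the combination (the two containing $ab$ minus the two containing $mn$) yields
\begin{equation*}
D_{ab}(\mathcal{T}) - D_{mn}(\mathcal{T}) \;=\; D_{abm}(\mathcal{T}) + D_{abn}(\mathcal{T}) - D_{amn}(\mathcal{T}) - D_{bmn}(\mathcal{T}).
\end{equation*}
Writing analogous identities with $\{a,b,m,p\}$ and $\{a,b,n,p\}$, summing the three, and using the three-point identity $D_{mn}+D_{mp}+D_{np} = 2 D_{mnp}$, one eliminates every $2$-weight on the right and expresses $D_{ab}(\mathcal{T})$ as a rational combination of $3$-weights on the five leaves $\{a,b,m,n,p\}$; this settles $k=3$ whenever $n \geq 5$. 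For general $k$ I would proceed by induction on $k$, producing at each level an analogue that writes a $(k-1)$-weight as a rational combination of $k$-weights at the cost of one additional auxiliary leaf. Each induction step consumes one leaf, accounting for the hypothesis $n \geq 2k-1$.

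The main obstacle is finding the right identity at level $k \geq 4$. The three-point identity exploits the clean tripod geometry of Steiner trees on three leaves; for $k \geq 4$ the minimal Steiner subtree on $k$ leaves may have several branch points, and its total weight depends nontrivially on the pattern of splits of the chosen leaves (already for $k=4$, the bridge edge in an H-configuration is counted with the wrong multiplicity in any naive sum of the four $3$-weights). A more robust alternative I would try is to first recover the split system of $\mathcal{T}$ from the $k$-weights via a generalised four-point-type condition on every $(k+2)$-subset of leaves, yielding the combinatorial topology of $\mathcal{T}$, and then to determine the edge weights by solving a linear system on the edges; the hypothesis that $\mathcal{T}$ has no vertex of degree~$2$ is precisely what ensures this system has a unique solution.
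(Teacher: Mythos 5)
The paper does not actually prove this statement: it is quoted from Pachter and Speyer \cite{P-S}, so there is no internal proof to compare against. That said, your reduction is genuinely complete only for $k=3$. There the tripod identity $2D_{abc}=D_{ab}+D_{ac}+D_{bc}$ is correct, your four-leaf consequence $D_{ab}-D_{mn}=D_{abm}+D_{abn}-D_{amn}-D_{bmn}$ checks out, and summing over the three pairs in $\{m,n,p\}$ gives
\begin{equation*}
3D_{ab}=2D_{mnp}+2\bigl(D_{abm}+D_{abn}+D_{abp}\bigr)-\bigl(D_{amn}+D_{amp}+D_{anp}\bigr)-\bigl(D_{bmn}+D_{bmp}+D_{bnp}\bigr),
\end{equation*}
which recovers every $2$-weight from the $3$-weights once $n\geq 5$; Theorem 1 then reconstructs the tree. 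So the case $k=3$, $n\geq 5$ is fine.

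For $k\geq 4$, however, you supply no identity, and what remains is a statement of intent rather than a proof; the obstruction you yourself name is real and fatal to the plan as described. Already for four leaves the Steiner weight equals half the sum of the six pairwise distances minus the weight of the bridge of the quartet, and which pair of leaves straddles that bridge depends on the topology, so no fixed linear combination of $k$-weights on a bounded leaf set computes a $(k-1)$-weight uniformly over all trees. This is precisely why Pachter and Speyer do not induct on $k$ through $(k-1)$-weights: they expand $D_S(\mathcal{T})=\sum_e w(e)\cdot [\,S \text{ has leaves on both sides of } e\,]$ and average over all $(k-2)$-subsets $R$ of the remaining leaves, showing that a suitable combination of the sums $\sum_R D_{\{i,j\}\cup R}$ and related quantities has, edge by edge, a coefficient detecting whether $e$ lies on the path from $i$ to $j$; the hypothesis $n\geq 2k-1$ is what makes the relevant binomial coefficients nonvanishing. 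Note also that your leaf accounting is inconsistent with the bound you are trying to explain: $2k-1$ grows by two per unit increase of $k$, and your own base step already spends three auxiliary leaves to pass from $2$-weights to $3$-weights, not one. The fallback you sketch (recover the split system from a four-point-type test on $(k+2)$-subsets, then solve a linear system for the edge weights) is plausible but equally unsubstantiated; proving that this linear system has full rank is essentially the same difficulty in different clothing. As it stands the proposal proves the theorem only for $k=3$.
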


In \cite{H-H-M-S}, the authors gave the following 
 characterization of the ip-l-treelike families of positive real numbers:

\begin{thm} {\bf  (Herrmann, Huber, Moulton, Spillner)}.
If $n \geq 2k$, 
a family  of positive real numbers 
$ \{D_{I}\}_{I \in {\{1,..., n\} \choose k}}$ is ip-l-treelike 
if and only if the restriction to every $2k$-subset of $\{1,...,n\}$ is 
ip-l-treelike.   
\end{thm}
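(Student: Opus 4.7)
The easy direction is immediate: if $\mathcal{T}$ is an ip-weighted tree on leaves $\{1,\ldots,n\}$ realising $\{D_I\}$, then for any $2k$-subset $S$ the subtree of $\mathcal{T}$ spanned by $S$ (with degree-$2$ vertices suppressed, summing their incident edge weights) is internal-positive-weighted and realises the restriction $\{D_I\}_{I \in \binom{S}{k}}$. So the plan is to establish the converse by induction on $n$, with base case $n=2k$ (where there is nothing to prove). For the inductive step, assume $n \geq 2k+1$ and that the statement is known for $n-1$.

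First apply the inductive hypothesis to the restriction of $\{D_I\}$ to $\binom{\{1,\ldots,n-1\}}{k}$: every $2k$-subset of $\{1,\ldots,n-1\}$ is also a $2k$-subset of $\{1,\ldots,n\}$, so the hypothesis is inherited, and we obtain an ip-weighted tree $\mathcal{T}'$ on leaves $\{1,\ldots,n-1\}$ realising those $k$-weights. Since $n-1 \geq 2k$, the Pachter--Speyer theorem guarantees $\mathcal{T}'$ is unique (up to suppressing degree-$2$ internal vertices). To finish I would produce $\mathcal{T}$ from $\mathcal{T}'$ by attaching a pendant edge to a single leaf labelled $n$ at a suitable point (a vertex of $\mathcal{T}'$, or a point subdividing an edge of $\mathcal{T}'$) with a suitable positive length. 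The location and the length must be chosen so that $D_I(\mathcal{T})=D_I$ for every $I \in \binom{\{1,\ldots,n\}}{k}$ containing $n$; for $I$ not containing $n$ the equality follows automatically from the defining property of $\mathcal{T}'$ because the Steiner subtree on such $I$ does not use the new pendant edge.

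To determine the attachment data I would use the hypothesis on $2k$-subsets of the form $T\cup\{n\}$ with $T \in \binom{\{1,\ldots,n-1\}}{2k-1}$. Each such restriction is ip-l-treelike, giving an ip-weighted tree $\mathcal{S}_T$ on leaves $T\cup\{n\}$. Since $|T|=2k-1$, Pachter--Speyer applies to the restriction of $\mathcal{S}_T$ to $T$ and to the restriction of $\mathcal{T}'$ to $T$: both realise the same $k$-weights on $\binom{T}{k}$, so these two trees agree. Hence each $\mathcal{S}_T$ prescribes unambiguously an attachment point $\pi_T$ on (the Steiner-restriction to $T$ of) $\mathcal{T}'$ and a pendant length $\ell_T>0$. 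The heart of the proof is the consistency step: I have to show that all the $\pi_T$ correspond to one and the same point of $\mathcal{T}'$ and that all $\ell_T$ coincide. For this I would compare any two $(2k-1)$-subsets $T,T'$ by interpolating through a chain of $(2k-1)$-subsets $T=T_0,T_1,\ldots,T_r=T'$ in which consecutive ones overlap in $2k-2$ elements (possible because $n-1 \geq 2k$), and at each step invoke uniqueness on the common $(2k-2)$-subset together with the shared leaf $n$, which locates the attachment point on an edge of $\mathcal{T}'$ shared by both restrictions.

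Once consistency is established, attach the new leaf $n$ to $\mathcal{T}'$ at the common point $\pi$ (subdividing an edge if necessary and distributing its weight accordingly) by a pendant edge of the common positive length $\ell$, yielding an ip-weighted tree $\mathcal{T}$. For any $I \in \binom{\{1,\ldots,n\}}{k}$ with $n \in I$, pick a $(2k-1)$-subset $T \subset \{1,\ldots,n-1\}$ containing $I\setminus\{n\}$; then $D_I(\mathcal{T}) = D_I(\mathcal{S}_T) = D_I$ by the way $\pi$ and $\ell$ were read off from $\mathcal{S}_T$, while for $n \notin I$ we already have $D_I(\mathcal{T}) = D_I(\mathcal{T}') = D_I$. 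The main obstacle I anticipate is precisely the consistency step: one has to rule out the possibility that different $(2k-1)$-subsets indicate attachment points lying on the same edge of $\mathcal{T}'$ but at different interior positions, or on an internal vertex versus subdividing an incident edge, which requires exploiting that any two such subsets share enough leaves to localise the attachment by an already-unique intermediate tree.
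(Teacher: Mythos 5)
This statement is quoted in the paper as a known result of Herrmann, Huber, Moulton and Spillner (reference [H-H-M-S]); the paper itself gives no proof of it, so your attempt can only be judged on its own terms.

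As it stands there is a genuine gap, and it sits exactly where you locate it: the consistency step. You derive, for each $(2k-1)$-subset $T\subset\{1,\dots,n-1\}$, an attachment point $\pi_T$ and a pendant length $\ell_T$ for the new leaf $n$, and you propose to reconcile any two such subsets by a chain $T=T_0,\dots,T_r=T'$ with consecutive overlaps of size $2k-2$, invoking uniqueness of the tree on $(T_i\cap T_{i+1})\cup\{n\}$. But that uniqueness only pins down the position of $n$ \emph{relative to the Steiner subtree of the $2k-2$ common leaves}. The point $\pi_{T_i}$ lives on the Steiner restriction of $\mathcal T'$ to $T_i$, which is in general a proper subtree of $\mathcal T'$; if the true attachment site of $n$ lies off the Steiner tree of $T_i\cap T_{i+1}$, then $\pi_{T_i}$ and $\pi_{T_{i+1}}$ can both project to the same point of that common Steiner tree while being genuinely different points of $\mathcal T'$ (with different pendant lengths compensating). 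So the step ``invoke uniqueness on the common $(2k-2)$-subset together with $n$'' does not by itself force $\pi_{T_i}=\pi_{T_{i+1}}$, and the argument does not close. This is precisely why the theorem is not a routine leaf-by-leaf induction; a correct proof has to control where $n$ attaches relative to \emph{all} of $\mathcal T'$, not just relative to overlapping $(2k-2)$-blocks.

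Two further points you should repair even if the consistency lemma were granted. First, the Pachter--Speyer theorem as stated requires $k\geq 3$ and concerns positive-weighted trees with no degree-$2$ vertices; here the trees are only internal-positive-weighted, so pendant edges may have weight $0$, and uniqueness can fail (two leaves sitting at distance $0$ from the same internal vertex can be permuted, and a leaf with a zero-weight pendant edge attached at different vertices of a zero-diameter region gives distinct trees with identical $k$-weights). You need to normalise such degeneracies before appealing to any uniqueness statement, and handle $k=2$ separately. Second, ``ip-l-treelike'' only requires $S\subset L(T)$, so the tree produced by the inductive hypothesis may have extra leaves; you should pass to the Steiner span of $\{1,\dots,n-1\}$ before arguing about where $n$ attaches. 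The easy direction and the overall architecture (base case $n=2k$, induction on $n$) are fine.
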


Besides they studied when a family  of positive real numbers is ip-l-treelike
in the case $k=3$.

Finally, in \cite{Ru1} and \cite{Ru}, for any weighted  tree ${\cal T}$, for 
$k \geq 2$ and for any distinct leaves $ i_1,..., i_k$, the author defines 
  $D_{i_1,....,i_k}({\cal T})$ to be the sum of the lengths of the edges 
of the minimal  subtree joining $i_1$,....,$i_k$ and
gives an inductive
 characterization of the families of real numbers 
indexed by the subsets of $\{1,...,n\}$
 of  cardinality greater or equal than $2$, that
 are the families of  the multiweights of a tree with $n$ leaves and the 
set of its leaves.

Let $n ,k \in \N$ with $n >k$.
In this paper we  study the problem of the  characterization
of  the  families of positive real numbers, indexed by the $k$-subsets
 of an $n$-set, that are p-graphlike. 
As we have already said, the case $k=2$ has already been studied by Hakimi and 
Yau.
Here we examine the case $k=n-1$, both for trees (Section 3)  and graphs
(Section 4), 
see Theorems \ref{thm:trees}, \ref{thm:trees2}, \ref{thm:graphs}
and \ref{thm:morevertices}.

\section{Notation and a first remark }

\begin{nota}
$\bullet $ Let $ \mathbb{R}_{+} =\{x \in \mathbb{R} | \; x >0\}$.

$ \bullet $ For any $n \in \N $ with $ n \geq 1$, let $[n]= \{1,..., n\}$.

$ \bullet $ For any set $S$ and $k \in \mathbb{N}$,  let ${S \choose k}$
be the set of the $k$-subsets of $S$.

$ \bullet $ 
Throughout the paper, we will consider only simple finite graphs.
  For any 
$v,v' \in V(G)$, let $e(v,v') $ denote the edge joining $v$ and $v'$.

$\bullet $ For simplicity, the vertices of graphs
 will be often named with natural numbers. In the figures the names of the 
vertices will be written in bold calligrahic in order to avoid confusion with 
the weights.
\end{nota}

\begin{defin} \label{intest}
Let ${\cal G}=(G,w) $ be a positive-weighted graph. 
We say that a connected subgraph of $G$, $R$, realizes $ D_{i_1,...., i_k}({\cal G})$,
 or it is a {\bf subgraph realizing} $ D_{i_1,...., i_k}({\cal G})$ if 
$i_1,...,i_k$ are vertices of $R$ and 
$w(R)= D_{i_1,...., i_k}({\cal G})$. Observe that it is necessarily a tree, so we will 
call it also {\bf subtree realizing  $ D_{i_1,...., i_k}({\cal G})$}.

Let $S$ be a finite set. 
If $D_I$ for $I \in {S \choose k}$   are positive real numbers 
and there exist a nonnegative weighted graph $ {\cal G} =(G,w)$  
and a subset $S$  of $V(G)$, such that 
$D_I ({\cal G}) = D_I$ for any 
$I \in {S \choose k}$, then we say that $({\cal G}, S) $ realizes the family 
 $\{D_{I}\}_{I \in {S \choose k}}$ and, following \cite{H-Y}, 
 we call  the vertices  in $S$   {\bf external vertices} and 
the other vertices {\bf internal vertices}.
\end{defin}

\begin{rem}\label{rem:triangleinequality}
Let ${\cal G}= (G,w) $ be a positive-weighted graph; then, for any 
$I,J,K \subset V(G) $ with $ J \cap K \neq \emptyset$ and $ J \cup 
K \supset I$,
we have the ``triangle inequality''
\begin{equation} \label{eq:triangle}
D_I ({\cal G}) \leq D_J ({\cal G}) + D_K ({\cal G}). 
\end{equation}
\end{rem}

\begin{proof}
Let $G'$ and $G''$ be two subgraphs of $G$   
realizing respectively $D_J ({\cal G}) $ and $D_K ({\cal G}) $: 
the union of these subgraphs is a connected subgraph, $G'''$, 
whose set of vertices contain $J$ and $K$ and then $I$.
So we have: $$D_I ({\cal G}) \leq w(G''') 
\leq w(G') + w(G'') = D_J ({\cal G}) + D_K ({\cal G}). $$
\end{proof}

\section{$(n-1)$-dissimilarity vectors of trees with $n$ vertices}

In this section we want to examine when a family of positive real 
numbers  $\{D_I\}_{I \in {[n] \choose n-1 }}$ is treelike.

\begin{nota} Let $n \in \mathbb{N},\;\; n \geq 3$. 
Given a family of real numbers $\{D_I\}_{I \in {[n] \choose n-1 }}$,
we denote $D_{1,..., \hat{i},..., n}$ by
$D_{\hat{i}}$ for any $i \in [n]$.
\end{nota}

\begin{thm}\label{thm:trees}
Let $n \in \mathbb{N},\;\; n \geq 3$ and 
let  $\{D_I\}_{I \in {[n] \choose n-1 }}$ be a family of positive real 
numbers.

(a) There exists a positive-weighted tree ${\cal T} =(T,w)$ 
with at least $n$  {\em vertices}, $1,...,n$,  and such that  
$D_{\hat{i}} ({\cal T}) = D_{\hat{i}}$ for any  $i=1,..., n$
if and only if 
 \begin{equation} \label{disn-2}
(n-2)D_{\hat{i}} \leq \sum_{j \in [n]-\{ i\} } D_{\hat{j}}
\end{equation} for any  $  i \in [n] $
and at most one of the inequalities  (\ref{disn-2}) is an equality.

(b) There exists  a positive-weighted tree ${\cal T} =(T,w)$ 
 with at least $n$ {\em leaves}, $1,...,n$,  and such that  $D_{\hat{i}} ({\cal T}) = D_{\hat{i}}
$ for any  $i \in [n]$
if and only if 
 \begin{equation} \label{strictn-2}
(n-2)D_{\hat{i}} < \sum_{j \in [n] -\{i\} } D_{\hat{j}}
\end{equation} for any  $  i \in [n] $.
\end{thm}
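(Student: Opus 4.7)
\emph{Necessity.} Let ${\cal T}=(T,w)$ realize the family. Replacing $T$ by its minimal subtree containing $\{1,\ldots,n\}$ does not change the $D_{\hat{i}}({\cal T})$ and ensures every leaf of $T$ lies in $[n]$. Each edge $e$ of $T$ partitions $[n]$ into $A_e\sqcup B_e$, and the subtree realizing $D_{\hat{i}}({\cal T})$ uses $e$ iff both $A_e\setminus\{i\}$ and $B_e\setminus\{i\}$ are nonempty; setting $\alpha_i:=\sum_{e:\,A_e=\{i\}\text{ or }B_e=\{i\}}w(e)$, one reads off $D_{\hat{i}}({\cal T})=w(T)-\alpha_i$. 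Because $|A_e|+|B_e|=n\geq 3$, no edge has both parts singletons, so each edge contributes to at most one $\alpha_i$, giving $\sum_i\alpha_i\leq w(T)$. A one-line rearrangement then yields
\[
\sum_{j\neq i}D_{\hat{j}}-(n-2)D_{\hat{i}}=\Bigl(w(T)-\sum_j\alpha_j\Bigr)+(n-1)\alpha_i\;\geq\;0,
\]
proving (\ref{disn-2}). Equality forces $\alpha_i=0$ and $\sum_j\alpha_j=w(T)$; the latter means every edge of $T$ is incident to a leaf, which in a tree forces $T$ to be a star, and then $\alpha_i=0$ identifies $i$ with its unique non-leaf. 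So at most one index yields equality, finishing (a). For (b), if $1,\ldots,n$ are all leaves of $T$, the leaf-edge at $i$ makes $\alpha_i>0$ for every $i$, yielding (\ref{strictn-2}).

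\emph{Sufficiency.} Assume first that all inequalities (\ref{disn-2}) are strict. Take the star ${\cal T}$ with a fresh central Steiner vertex $c$ and edges $(c,i)$ of weight
\[
w_i:=\frac{1}{n-1}\sum_{k=1}^n D_{\hat{k}}\;-\;D_{\hat{i}};
\]
positivity is exactly the strict form of (\ref{disn-2}), and a direct computation gives $D_{\hat{i}}({\cal T})=\sum_{j\neq i}w_j=D_{\hat{i}}$. Since $1,\ldots,n$ are leaves of this star, this simultaneously settles (b) and the strict-inequalities case of (a). If instead equality holds at a single index $i_0$, take the star with center $i_0\in[n]$ and edges $(i_0,j)$, $j\neq i_0$, of weight $w_j:=D_{\hat{i_0}}-D_{\hat{j}}$. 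The equality at $i_0$ rearranges to $\sum_{j\neq i_0}w_j=D_{\hat{i_0}}$, giving $D_{\hat{i_0}}({\cal T})=D_{\hat{i_0}}$; substituting this equality into the strict inequality (\ref{disn-2}) at $j\neq i_0$ collapses it to $D_{\hat{i_0}}>D_{\hat{j}}$, so $w_j>0$, and a direct check gives $D_{\hat{j}}({\cal T})=D_{\hat{i_0}}-w_j=D_{\hat{j}}$.

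\emph{Main obstacle.} The crux is the bookkeeping identity $D_{\hat{i}}({\cal T})=w(T)-\alpha_i$ together with the equality analysis that pins the equality case down to ``$T$ is a star centered at $i$''; once this is in hand, the ``at most one equality'' clause is forced, and the two star constructions in the sufficiency direction are essentially the only natural candidates (Steiner center when all inequalities are strict, external center at the equality index otherwise).
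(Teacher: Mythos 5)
Your proof is correct and follows essentially the same route as the paper: your $\alpha_i$ is exactly the paper's $a_i$ (the weight of the part of the minimal spanning subtree that becomes unnecessary once $i$ is dropped), the inequality and its equality analysis are obtained by the same rearrangement, and your two star constructions, with edge weights $\frac{1}{n-1}\sum_k D_{\hat{k}}-D_{\hat{i}}$ and $D_{\hat{i_0}}-D_{\hat{j}}$, coincide with the paper's after simplification. The only loose point is the claim that $\sum_j\alpha_j=w(T)$ forces every edge to be incident to a leaf (with internal Steiner vertices the equality case only forces a subdivided star centred at $i$), but the conclusion you need --- at most one index gives equality --- still follows, and the paper's own equality analysis is no more precise on this point.
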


\begin{proof} (a) $\Rightarrow$ Let ${\cal T}= (T,w)$ be a positive weighted 
 tree and let $ [n] \subset V(T)$. 
We want to show that for any $ k \in [n]$  
\begin{equation} \label{n-2hati}
(n-2) D_{\hat{k}}({\cal T}) \leq \sum_{j \in [n] -\{k\}} D_{\hat{j}}
({\cal T}).
\end{equation}
 Let $G_{1,...,n}$ be a subtree of $T$ 
 realizing $D_{1,..., n}({\cal T})$; obviously 
it is a tree with set of  leaves contained in $[n]$. Let 
  $G_{\hat{k}}$ be a subtree of $T$ 
 realizing $D_{\hat{k}}({\cal T})$; obviously 
it is a tree with set of  leaves contained in 
$\{1,...,\hat{k}, ..., n\}$.
Observe that $ G_{\hat{k}}$ is a subgraph of 
 $ G_{1,..., n}$ (in fact, for every $r,s \in \{1,...,\hat{k}, ..., n\}$, 
 the path between $r$ and $s$ in  $ G_{\hat{k}}$ must coincide with
the path between $r$ and $s$ in $G_{1,...,n}$, 
 since $T$ is a tree).
For every $k=1,..., n$, let $a_k$ be the weight of the subgraph 
 $ G_{1,..., n}-  G_{\hat{k}}$. The inequality (\ref{n-2hati})
becomes
$$ (n-2)(D_{1,...,n} ({\cal T}) -a_k) \leq \sum_{j \in [n] -\{k\}} 
(D_{1,...,n} ({\cal T}) -a_j),
$$ 
which is equivalent to 
$$  \sum_{ j \in [n] -\{k\} } a_j \leq D_{1,...,n}({\cal T}) + (n-2) a_k,$$ 
which is true, since, obviously, 
$  \sum_{ j \in [n] -\{k\} } a_j \leq D_{1,...,n} ({\cal T})$.

Now we want to prove that at most one of the inequalities  (\ref{disn-2}) is an equality.
Suppose
$$(n-2) D_{\hat{k}}({\cal T}) = \sum_{ j \in [n] -\{k\} } D_{\hat{j}} ({\cal T}).$$ 
Then $$  \sum_{ j \in [n] -\{k\} } a_j = D_{1,...,n} ({\cal T}) + (n-2) a_k.$$ 
Since
$  \sum_{ j \in [n] -\{k\} } a_j \leq D_{1,...,n}({\cal T})$, 
we get $a_k=0$ and $$  D_{1,...,n} ({\cal T})= \sum_{ j \in [n] -\{k\} } a_j .$$
Thus $G_{1,..., n}$ is a star tree with leaves $1,...., \hat{k}, ...., n$ 
and center $k$ and $a_j=0$ if and only if $j=k$. It is easy to check that in
 this case only one of the inequalities  (\ref{disn-2}) is an equality.

 $\Leftarrow$ 
We consider two cases: the case where
all the inequalities  (\ref{disn-2}) are strict and 
the case where exactly 
one of the inequalities  (\ref{disn-2}) is an equality.

$\bullet$
First let us suppose that all the inequalities  (\ref{disn-2}) are strict.
Let $ {\cal T}$ be the star tree with $[n]$ as set of leaves and center 
$n+1$ and, for $k=1,...,n$, let   $$w( e(n+1,k))=
 \frac{ \sum_{ j \in [n] -\{k\} } D_{\hat{j}}-
(n-2) D_{\hat{k}}}{n-1} . $$ 
For any $k=1,....,n$, we have that 
$D_{\hat{k}} ({\cal T})$ is equal to the sum of the
 weights of all the edges but the edge  $ e(n+1,k)$. Therefore
$$D_{\hat{k}} ({\cal T}) = \frac{1}{n-1} \left[ \sum_{h \in [n] -\{k\}}
\;\; \left( \sum_{ j \in [n] -\{h\} } D_{\hat{j}}-
(n-2) D_{\hat{h}} \right)
\right]= $$ 
$$ = \frac{1}{n-1} \left[ \sum_{ h \in [n] -\{k\} }
\;\; \left(\sum_{j \in [n]} D_{\hat{j}}-
(n-1) D_{\hat{h}} \right) \right]=$$ 
$$ = \frac{1}{n-1} 
\left[ 
(n-1) \left( \sum_{j \in [n]} D_{\hat{j}} \right)
\;\;-
\;\; (n-1)
 \left( \sum_{h \in [n] -\{k\} }  D_{\hat{h}} \right) 
\right]
= D_{\hat{k}}. $$ 

$\bullet$
Now let us suppose that exactly 
one of the inequalities  (\ref{disn-2}) is an equality, precisely
\begin{equation} \label{eqr} 
(n-2) D_{\hat{r}} = \sum_{ j \in [n] -\{r\} } D_{\hat{j}}.
\end{equation}
Let $ {\cal T}$ be the star tree with $\{1,..., \hat{r},..., n\}$ as 
set of leaves and center 
$r$ and, for $k \in [n] -\{r\}$, let   $$
 w(e(r,k))= \frac{ \sum_{ j \in [n] -\{k\} } D_{\hat{j}}-
(n-2) D_{\hat{k}}}{n-1} . $$ 
We want to show that, for any $k=1,..., n$, we have that 
$D_{\hat{k}} ({\cal T}) =D_{\hat{k}} $. 
Let $k \neq r$.  
Thus  $D_{\hat{k}} ({\cal T})$ is equal to the sum of the
 weights of all the edges but the edge  $e(r,k)$.
Therefore
$$D_{\hat{k}} ({\cal T}) = \frac{1}{n-1} \left[ \sum_{h \in [n] -\{r,k\}}
\;\; \left(\sum_{j \in [n] -\{h\}} D_{\hat{j}}-
(n-2) D_{\hat{h}} \right)
\right]= $$ 
$$ = \frac{1}{n-1} \left[ \sum_{ h \in [n] -\{r,k\} }
\;\; \left( \sum_{j \in [n]} D_{\hat{j}}-
(n-1) D_{\hat{h}} \right) \right]=$$ 
$$ = \frac{1}{n-1} 
\left[ 
(n-2) \left( \sum_{j \in [n]} D_{\hat{j}} \right)
\;\;-
\;\; (n-1)
 \left( \sum_{h \in [n]}  D_{\hat{h}} -D_{\hat{r}} - D_{\hat{k}}\right) 
\right]= $$  $$ =
\frac{1}{n-1} 
\left[ 
 -\sum_{j \in [n]} D_{\hat{j}} 
+(n-1)D_{\hat{r}} + (n-1)D_{\hat{k}}
\right]= D_{\hat{k}}, $$ 
where the last equality holds because, by (\ref{eqr}), 
we have that $(n-1)D_{\hat{r}} =  \sum_{j=1,..., n} D_{\hat{j}} $.
Besides, for any $k \neq r$,
$$ D_{\hat{r}} ({\cal T}) =D_{\hat{k}} ({\cal T})  + w(e(r,k))= 
D_{\hat{k}}  + w(e(r,k))= D_{\hat{r}} , $$
    where the last equality holds again by (\ref{eqr}).


(b) $\Rightarrow $ We can argue as in 
the analogous implication in (a). In this 
case, since $1,...,n $ are leaves, all the $a_i $ are nonzero, so we get 
the strict inequalities (\ref{strictn-2}).

$ \Leftarrow $ 
Let $ {\cal T}$ be the star tree with $\{1,...., n\}$ as set of leaves and center 
$n+1$ and, for $k \in [n]$, let $$ w(e(n+1,k ))
= \frac{ \sum_{j \in [n] -\{k\}} D_{\hat{j}}-
(n-2) D_{\hat{k}}}{n-1}. $$
We can easily prove that $ D_{\hat{k}} ({\cal T}) = D_{\hat{k}}$ for any 
$k=1,...,n$.

\end{proof}

Now we examine the case where there are no internal vertices (see Definition 
\ref{intest}).

\begin{thm}\label{thm:trees2}
Let $n \in \mathbb{N},\;\; n \geq 3$  and 
let  $\{D_I\}_{I \in {[n] \choose n-1 }}$ be a family of positive real 
numbers.

There exists  a positive-weighted tree ${\cal T} =(T,w)$ 
 with exactly $n$   vertices, $1,..., n$,
 and such that  $D_{\hat{i}} ({\cal T}) = 
D_{\hat{i}}$ for any  $i \in [n]$
if and only if the following three conditions hold:

(i)  \begin{equation} \label{disn-2bis}
(n-2)D_{\hat{i}} \leq \sum_{j \neq i} D_{\hat{j}}
\end{equation} for any  $ i   \in [n] $ 
and at most one of the inequalities  (\ref{disn-2bis}) is an equality.

(ii)  either one of the inequalities 
(\ref{disn-2bis}) is an equality or the maximum in $ 
\{D_{\hat{i}}\}_{i \in [n]} $ is acheived at least twice.

(iii)  the maximum in $ \{D_{\hat{i}}\}_{i  \in [n] } $
 is acheived at most $n-2$ times.
\end{thm}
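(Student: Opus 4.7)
For the forward direction, the plan is to adapt the argument from Theorem~\ref{thm:trees}(a) to the hypothesis $V(T)=[n]$. In that proof, the subtree $G_{1,\ldots,n}$ realizing $D_{1,\ldots,n}({\cal T})$ is automatically $T$ itself, and for $k\in[n]$ the subtree realizing $D_{\hat k}({\cal T})$ equals $T$ minus the incident edge at $k$ when $k$ is a leaf of $T$, and equals $T$ itself when $k$ is internal (otherwise removing $k$ would disconnect $T$). Thus the quantity $a_k:=w(T)-D_{\hat k}({\cal T})$ is either $0$ (when $k$ is internal) or the positive weight of the unique leaf edge at $k$. Condition (i) then follows from the same computation as in Theorem~\ref{thm:trees}(a), with equality only when $T$ is a star. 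Condition (iii) holds because $D_{\hat k}({\cal T})$ attains its maximum $w(T)$ precisely at the internal vertices of $T$, and any tree on $n\ge 3$ vertices has at least two leaves. For (ii), if no inequality in (i) is an equality then $T$ is not a star, and a non-star tree on $n\ge 3$ vertices has at least two internal vertices.

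For the converse I split into two cases. If some equality $(n-2)D_{\hat r}=\sum_{j\ne r}D_{\hat j}$ holds, I take the star tree on the vertex set $[n]$ with center $r$ and use the edge weights prescribed by Theorem~\ref{thm:trees}(a); positivity of these weights follows from the remaining $n-1$ strict inequalities, and the multiweight verification is identical to the one given there. If no inequality is an equality, conditions (ii)--(iii) force the common maximum $D^*:=\max_i D_{\hat i}$ to be attained on a subset $M\subseteq[n]$ of size $m$ with $2\le m\le n-2$ (in particular $n\ge 4$); let $L:=[n]\setminus M$, so $|L|\ge 2$. I build ${\cal T}$ by arranging the vertices of $M$ as a path $i_1-i_2-\cdots-i_m$, attaching each vertex of $L$ as a pendant to some $i_j$ with both $i_1$ and $i_m$ receiving at least one pendant (possible since $|L|\ge 2$), assigning to the pendant edge at $\ell\in L$ the weight $a_\ell:=D^*-D_{\hat\ell}>0$, and distributing
\[
S \;:=\; (m-n+1)D^*+\sum_{\ell\in L}D_{\hat\ell}
\]
positively among the $m-1$ internal edges. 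The strict inequality (i) applied at any $i\in M$ rewrites as $(n-m-1)D^*<\sum_{\ell\in L}D_{\hat\ell}$, which is precisely $S>0$, so such a distribution exists. One then has $w({\cal T})=\sum_{\ell\in L}a_\ell+S=D^*$, and the dichotomy of the forward direction applied to ${\cal T}$ itself yields $D_{\hat k}({\cal T})=D^*=D_{\hat k}$ for $k\in M$ and $D_{\hat k}({\cal T})=D^*-a_k=D_{\hat k}$ for $k\in L$.

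The main obstacle I anticipate is the bookkeeping in the second case: one must observe that the strict-inequality content of (i), applied at the indices of $M$, is exactly what guarantees $S>0$, and realize that the particular tree topology chosen on the internal vertex set $M$ is irrelevant; since each $D_{\hat k}({\cal T})$ depends only on $w({\cal T})$ and on whether $k$ is internal (giving $w({\cal T})$) or on the weight of the leaf edge at $k$ (giving $w({\cal T})-a_k$), any positive distribution of $S$ across the $m-1$ internal edges of any tree with internal vertex set $M$ and leaf set $L$ completes the construction.
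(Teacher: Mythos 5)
Your proof is correct and follows essentially the same route as the paper's: the forward direction via the observation that $D_{\hat k}({\cal T})$ equals $w(T)$ for internal $k$ and $w(T)$ minus the pendant-edge weight for leaves, and the converse via a star in the equality case and a caterpillar-type tree with internal vertex set $M$ and pendant leaves $L$ in the strict case. The only (harmless) difference is that the paper fixes a specific topology --- a path on $M$ with the leaves of $L$ split between its two endpoints and $S$ spread equally over the path edges --- whereas you correctly observe that any tree on $M$ with every vertex of $M$ made internal, and any positive distribution of $S$, works.
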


\begin{proof}
$\Rightarrow$ (i) The inequality follows from the analogous implication of 
Theorem \ref{thm:trees}.

(ii), (iii) 
Since $n \geq 3$, at least one of the vertices of $T$
must have degree greater or equal than $2$. 
Besides observe that, if a vertex $k $ of $T$ 
has degree greater or equal than $2$, then $ D_{\hat{k}}({\cal T}) 
=   D_{1,...., n} ({\cal T})$, which is greater or equal than  
$D_{\hat{j}} ({\cal T})$ for every $j=1,..., n$. So 
$ D_{\hat{k}}({\cal T}) $ is the maximum of $ 
\{D_{\hat{i}}({\cal T})\}_{i \in [n] } $.

Thus, if in $T$ there are at least two vertices
 of degree greater or equal than $2$, then
the maximum in    $ \{D_{\hat{i}}({\cal T})\}_{i \in [n]} $ is acheived twice.

If in $T$ there is only one vertex, $k$, of degree 
greater or equal than $2$, then $T$ is a star tree with center $k$ 
 and we can check easily that only one of the inequalities (\ref{disn-2bis})
 is an equality. 

Therefore we have proved (ii).

To prove (iii),
observe that in $T$ there are at least two leaves, $r$ and $s$. Since they 
are leaves, we have:
$$ D_{\hat{r }} ({\cal T}), D_{\hat{s}} ({\cal T}) < D_{1,..., n} ({\cal T})= 
D_{\hat{k}}({\cal T}) $$ for any $k$ vertex of degree at least $2$.
So the maximum in    $ \{D_{\hat{i}} ({\cal T}) \}_{i \in [n]} $ is acheived at most 
$n-2$ times and we have proved (iii).

$\Leftarrow$ Suppose (i),(ii), and (iii) hold. 
\begin{itemize}
\item If at least one (and then, by assumption, exactly one)  
of the inequalities (\ref{disn-2bis})  is an equality,
let us say $(n-2) D_{\hat{r}} = \sum_{j=1,..., n,\; j\neq r} D_{\hat{j}}$, 
let $ {\cal T}$ be the star tree with $\{1,..., \hat{r},..., n\}$ as 
set of leaves and center 
$r$ and, for $k=1,...,n$, $k \neq r$, let $$w(e( r,k))=
 \frac{ \sum_{j=1,..., n,\; j\neq k} D_{\hat{j}}-
(n-2) D_{\hat{k}}}{n-1} . $$ 
We can easily prove that $ D_{\hat{k}} ({\cal T}) = D_{\hat{k}}$ for any 
$k=1,...,n$.

\item If all the inequalities (\ref{disn-2bis}) are strict, then, 
by assumption,
 the maximum in $ \{D_I\}_{I \in {[n] \choose n-1}} $
 is acheived at least twice.
So we can suppose 
$$ D_{\hat{1}} = .....= D_{\hat{h}} > D_{\hat{h+1}}, ...., D_{\hat{n}}$$ 
for some  $h\geq 2$. 
By (iii) we have that  $n-h\geq 2$.
Let us divide the set $\{h+1,...., n\}$ into two nonempty subsets:
$\{h+1,....,h+s\}$ and $\{h+s +1,..., n\}$.

Let ${\cal T}= (T,w) $ be the weighted tree with leaves $h+1,...,n$ in 
Fig. \ref{treenvert} such that

\begin{figure}[!h]
\centering
\input{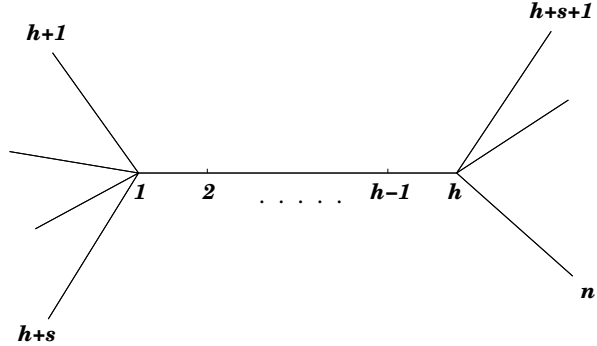}
\caption{ Tree in the case all the inequalities are strict \label{treenvert}}
\end{figure}

\medskip

$w(e(k,1) ) =   D_{\hat{1}} - D_{\hat{k}}$ for $k=h+1,..., h+s$, 

$w(e(k,h) ) =   D_{\hat{1}} - D_{\hat{k}}$ for $k=h+s+1,...,n$, 

$w(e(k,k+1)) =\frac{\sum_{j \geq h+1 }  D_{\hat{j}} -(n-h-1)  
D_{\hat{1}}}{h-1} $  for $k=1,...,h-1$.

\medskip

So the path between $1$ and $h$ has weight 
$\sum_{j \geq h+1 }  D_{\hat{j}} -(n-h-1)  D_{\hat{1}}$. 
Observe that the weights of the edges of ${\cal T}$ are positive, in fact 
$D_{\hat{1}} > D_{\hat{k}}$ for $k=h+1,..., n$ and 
$$ \sum_{j \geq h+1 }  D_{\hat{j}} -(n-h-1)  D_{\hat{1}} =
\sum_{j \geq 2 }  D_{\hat{j}} -(n-2)  D_{\hat{1}},$$ 
which is positive because we are in the case where    
all the inequalities (\ref{disn-2bis}) are strict.

Let $k \in \{1,..., h\}$. Then $ D_{\hat{k}} ({\cal T})$ is the sum 
of the weights of all the edges of ${\cal T}$, thus:
$$ D_{\hat{k}} ({\cal T})= \sum_{j =h+1,...,n} ( D_{\hat{1}} - D_{\hat{j}})
+  \sum_{j =h+1,...,n} D_{\hat{j}} - (n-h-1) D_{\hat{1}}  = D_{\hat{1}} =
D_{\hat{k}}.$$ 

Let $k \in \{h+1,..., n\}$. Then $ D_{\hat{k}} ({\cal T})$ is the sum 
of the weights of all the edges of ${\cal T}$ but the edge $e(k,1)$ if 
$k \in  \{h+1,..., h+s\}$ or the edge $e(k,h)$ if  $k \in  \{h+s+1,..., n\}$.
Hence $$ D_{\hat{k}} ({\cal T})=  D_{\hat{1}} - ( D_{\hat{1}} - D_{\hat{k}})  =
D_{\hat{k}}.$$

\end{itemize}

\end{proof}

\section{$(n-1)$-dissimilarity vectors of graphs with $n$ vertices}

In this section we want to examine when 
a family of positive real numbers  $\{D_I\}_{I \in {[n] \choose n-1 }}$
is graphlike.
 First we consider the case of graphs  with exactly $n$ vertices.

\begin{thm}\label{thm:graphs} Let $n \in \mathbb{N},\;\; n \geq 3$.
Let $\{D_I\}_{I \in {[n] \choose n-1 }}$
be a family of positive real numbers. 

There exists a positive weighted graph ${\cal G}=(G,w)$ with exactly $n$ vertices, 
$1,..., n$, and with 
$D_{\hat{i}}({\cal G}) =D_{\hat{i}}$ for any $i=1,..., n$
 if and only if the following two conditions hold:

(i)
\begin{equation} \label{n-2} 
(n-2)D_{\hat{i}} \leq \sum_{j=1,..., n, \;\; j \neq i}D_{\hat{j}}
\end{equation}
  
for any   $i \in [n]$,

(ii)  if the maximum in $\{D_{\hat{i}} 
\}_{i \in [n]} $ is acheived at least twice, the
inequalities (\ref{n-2}) are strict. 
\end{thm}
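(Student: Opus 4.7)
My plan is to handle both directions by a case analysis mirroring Theorem \ref{thm:trees2}, using explicit cycle-based constructions where the tree proof no longer applies.

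For the $(\Rightarrow)$ direction, I will adapt the quantitative argument of Theorem \ref{thm:trees}(a). Let $T$ be a minimum spanning tree of $\mathcal{G}$; since $\mathcal{G}$ has vertex set exactly $[n]$, we have $w(T) = D_{[n]}(\mathcal{G}) \geq D_{\hat{k}}(\mathcal{G})$, so $a_{k} := w(T) - D_{\hat{k}}(\mathcal{G}) \geq 0$. Inequality (i) is equivalent to $\sum_{j \neq i} a_{j} \leq w(T) + (n-2)\, a_{i}$. In the tree case one obtained the stronger bound $\sum_{j\neq i} a_{j} \leq w(T)$ from the disjointness of the edges pruned from $G_{[n]}$ to produce each $G_{\hat{j}}$, but in a graph this may fail (on a $4$-cycle one already has $\sum_{j} a_{j} > w(T)$). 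I will instead bound the individual savings $a_{j}$ by examining, for each minimum Steiner subgraph $R_{j}$ realizing $D_{\hat{j}}(\mathcal{G})$, the extent to which its edges can be shared with $R_{k}$ for $k \neq j$. For (ii) I argue by contradiction: assuming the maximum is attained at two indices $a, b$ and that some inequality (i) is an equality at an index $r$, the rigidity this forces on $R_{a}, R_{b}, R_{r}$ is incompatible with $\mathcal{G}$ having only $n$ vertices.

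For the $(\Leftarrow)$ direction, let $h$ be the number of indices at which $\{D_{\hat{i}}\}$ attains its maximum. If $h \leq n - 2$ and either some inequality (i) is an equality or $h \geq 2$, the hypotheses of Theorem \ref{thm:trees2} are satisfied and its tree construction (on exactly $n$ vertices) provides the required graph. The remaining cases need explicit constructions with cycles. When $h = n$ (all $D_{\hat{i}}$ equal to a common value $M$), I take the $n$-cycle $1 - 2 - \cdots - n - 1$ with each edge of weight $M/(n-2)$; a direct check shows each minimum Steiner subgraph for $[n] \setminus \{i\}$ has weight $M$. When $h = n-1$, say $D_{\hat{1}} = \cdots = D_{\hat{n-1}} = M > D_{\hat{n}}$, I use an $(n-1)$-cycle on $\{1, \ldots, n-1\}$ with edge weight $D_{\hat{n}}/(n-2)$ and attach vertex $n$ by two edges $e(n, 1), e(n, 2)$ each of weight $M - (n-3) D_{\hat{n}}/(n-2)$; positivity of these weights follows from the strict inequality $(n-2)\, D_{\hat{n}} < (n-1)\, M$ enforced by (ii).

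The remaining sub-case, $h = 1$ (maximum attained exactly once) with all inequalities (i) strict, is not covered by Theorem \ref{thm:trees2}. For this I would use the complete graph $K_{n}$ on $[n]$, choosing the edge weights so that, for each $i$, the minimum spanning tree of the induced $K_{n-1}$ on $[n] \setminus \{i\}$ has weight $D_{\hat{i}}$; in the concrete construction, edges incident to the unique maximum-achieving vertex receive small weights and edges among the remaining $n-1$ vertices receive a common larger weight, with the strict form of (i) guaranteeing positivity. The main obstacle will be this last construction, since Theorem \ref{thm:trees2} yields no tree realization and the graph must be built from scratch; a secondary obstacle is the $(\Rightarrow)$ direction of (i), where the subtree-containment relation $G_{\hat{k}} \subseteq G_{[n]}$ available in the tree case is lost and a more delicate edge-accounting argument is required.
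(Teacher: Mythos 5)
Your case analysis for the converse is sound where it leans on Theorem \ref{thm:trees2}, and your cycle constructions for $h=n$ and $h=n-1$ do check out (modulo $n=3$, where the $(n-1)$-cycle degenerates and a separate triangle construction is needed). But the proposal has two genuine gaps, and they coincide with the two points you yourself flag as obstacles, so they are not incidental. First, the forward direction of (i) is not proved: you only announce that you will ``bound the individual savings $a_j$ by examining the extent to which edges can be shared,'' after correctly observing that the disjointness argument from the tree case fails. What is needed is $\sum_{j\neq i} a_j \leq w(T) + (n-2)a_i$, and it is not clear how pairwise edge-sharing among the $R_j$ produces the coefficient $n-2$ in front of $a_i$. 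The paper proves (i) by a different device: fix a Steiner tree $H$ realizing $D_{\hat{n}}$, build an injection $\varepsilon: V(H)\setminus\{1\} \to E(H)$ assigning to each vertex an incident edge (root $H$ at $1$ and repeatedly prune leaves), and use the bound $D_{\hat{1}} \leq D_{\hat{i}} + w(\varepsilon(i))$ for $i = 2, \ldots, n-1$; summing gives $(n-2)D_{\hat{1}} \leq \sum_{i=2}^{n-1} D_{\hat{i}} + w(H) = \sum_{i=2}^{n} D_{\hat{i}}$, and condition (ii) falls out of the same computation because when $D_{\hat{1}} = D_{\hat{2}}$ one may replace the $i=2$ term by $D_{\hat{2}}$ alone and gain the strict term $w(\varepsilon(2)) > 0$. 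Your contradiction argument for (ii) (``the rigidity is incompatible'') is likewise only a plan, not an argument.

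Second, the case you single out as the main obstacle --- all inequalities strict and the maximum attained exactly once --- is where the real work of the theorem lies, and your $K_n$ sketch does not survive scrutiny. If the edges at the unique maximal vertex $1$ receive small weights $s_j$ and the remaining edges a large common weight, the optimal connected subgraphs become stars centred at $1$, which forces $D_{\hat{1}} = \sum_{j\geq 2}s_j = D_{\hat{i}} + s_i$ for every $i\neq 1$ and hence $(n-2)D_{\hat{1}} = \sum_{j\neq 1}D_{\hat{j}}$, i.e.\ exactly the equality case you excluded; some cycle must genuinely be used. Note also that $D_{\hat{i}}(\mathcal{G})$ need not equal the minimum spanning tree weight of the induced $K_{n-1}$ on $[n]\setminus\{i\}$, since the optimal Steiner subgraph may pass through the omitted vertex $i$. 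The paper handles this case by induction on $n$: an explicit four-vertex, four-edge graph for $n=4$, and an inductive step that subtracts $x = D_{\hat{1}} - D_{\hat{n}}$ from the data, realizes the reduced family on $n-1$ vertices while maintaining the invariant that every subgraph realizing the maximum contains the vertex $1$, and then attaches a pendant edge of weight $x$ at $1$. Without a construction of this sort your proof of the converse is incomplete precisely in its hardest case.
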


\begin{rem}\label{n-2implicak}
 Condition (i) implies that, for any $k \in \N$ with $ 1 \leq k \leq n-2$,
\begin{equation} \label{primon-22}
 k D_{\hat{i}} \leq \sum_{j \in \{i_1,...., i_{k+1}\}} 
D_{\hat{j}}
\end{equation}
for any distinct $ i,i_1,..., i_{k+1} \in [n] $, in particular 
condition (i) implies the triangle inequalities.
\end{rem}

\begin{proof}
Obviously we can suppose that, in (\ref{primon-22}),
 $D_{\hat{i}} = max_{j \in [n]} D_{\hat{j}}$.
Therefore
 $$(n-k-2)D_{\hat{i}} \geq \sum_{j \in [n] -\{i,i_1,...., i_{k+1}\}} 
D_{\hat{j}}.$$
The inequality above and condition (i)
imply at once the inequality  (\ref{primon-22}).
\end{proof}

{\em Proof of Theorem \ref{thm:graphs}}
$ \Rightarrow $ We can suppose $i=1 $ in (\ref{n-2}), so 
the inequality we have to prove becomes 
 $$(n-2) D_{\hat{1}} \leq \sum_{j=2,..., n}  D_{\hat{j}}.$$

Let $H$ be a subtree of $G$  realizing $D_{\hat{n}}$. 
Observe that we can construct an injective map 
$$ \varepsilon: V(H) -\{1\} \longrightarrow E(H)$$ sending a vertex $v$ 
to  an edge 
incident with $v$ (consider $H$ as rooted tree with $1$ as root and send any 
leaf to the unique incident edge, then delete these edges and repeat the 
procedure until you arrive at the root).  

Observe that $$ D_{\hat{1} } \leq D_{ \hat{i}}
+ w(\varepsilon(i)) $$ for $i=2,....., n-1$, in fact: let
  $R_i$ be the subgraph 
given by the union of a subtree  realizing $D_{\hat{i}}$ and
 of $ \varepsilon(i)$; the subgraph $R_i$  is connected (because the ends of 
$\varepsilon(i) $ are
$i$ and a vertex among $1,...., n$ different from $i$, thus a vertex of the 
subtree realizing $D_{ \hat{i}}$); besides 
  $2,..., n \in V(R_i) $; hence $ D_{\hat{1}} \leq w(R_i) = 
D_{ \hat{i}}+ w(\varepsilon(i))$.
Therefore  $$(n-2) D_{\hat{1} } \leq \sum_{i=2,..., n-1} 
\left(D_{ \hat{i}}
+ w(\varepsilon(i))\right) \leq $$ 
$$ \leq \sum_{i=2,..., n-1}  D_{  \hat{i}}+ \sum_{i=2,..., n-1} 
 w(\varepsilon(i)) \;  \leq \sum_{i=2,..., n-1} D_{ \hat{i}}
+ D_{\hat{n}} = \sum_{i=2,..., n}  
D_{\hat{i}}.$$

Suppose now that the maximum of $\{D_{\hat{i}}\}_{i \in [n]}$ is achieved 
at least twice. 
We want to show that the inequalities (\ref{n-2}) are strict.

We can suppose  $D_{\hat{1}}= D_{\hat{2}} \geq D_{\hat{j}}$ 
for $j=3,....,n$. Obviously,  
to prove that the  inequalities  (\ref{n-2}) are strict, it suffices to prove 
that $$(n-2) D_{\hat{1} } < \sum_{i=2,..., n}  D_{\hat{i}}.$$
We have already proved that 
$ D_{\hat{1} } \leq D_{ \hat{i}}
+ w(\varepsilon(i)) $ for $i=2,....., n-1$, in particular 
for $i=3,....., n-1$; besides we know that
$ D_{\hat{1} } = D_{ \hat{2}}$.
Thus we get: 
 $$(n-2) D_{\hat{1} } 
\; \leq \; D_{\hat{2}} +\sum_{i=3,..., n-1} 
\left(D_{ \hat{i}}
+ w(\varepsilon(i))\right) \; = \; \sum_{i=2,..., n-1} 
 D_{  \hat{i}}+ \sum_{i=3,..., n-1 }  
 w(\varepsilon(i)) \;<  $$  $$ 
 <  \; \sum_{i=2,..., n-1} 
 D_{  \hat{i}}+ \sum_{i=2,..., n-1 }  
 w(\varepsilon(i)) \; 
\leq \;
\sum_{i=2,..., n-1} D_{ \hat{i}}
+ D_{\hat{n}}=  \sum_{i=2,..., n}  
D_{\hat{i}}.$$

$\Leftarrow $ 
Let $n=3$. Let ${\cal G}$ be the complete graph  with vertices $1,2,3$ and 
 weights $w(e(1,2)) = D_{\hat{3}}$, $w(e(1,3)) = 
D_{\hat{2}}$,  $w(e(2,3)) = D_{\hat{1}}$. Obviously $D_{\hat{i}} ({\cal G})=
D_{\hat{i}}$ for $i=1,2,3$.

Let $n\geq 4$. We consider two cases: 

1) the maximum in $\{D_{\hat{i}}\}_{i \in [n] }$ is acheived at least twice,

2) the maximum in $\{D_{\hat{i}}\}_{i \in [n] }$ is acheived only once.

\bigskip

1)  Suppose $ D_{\hat{1}}= ....= D_{\hat{k}} > D_{\hat{k+1}}, ..., D_{\hat{n}}
$ for some $k \geq 2$.
Let $$a = \frac{D_{\hat{k+1}}+....+ D_{\hat{n}} - (n-k-1) D_{\hat{1}}}{n-2}$$ 
and, for any $ i=k+1, ...., n$, let 
$$x_i=  \frac{D_{\hat{k+1}}+....+ D_{\hat{n}} }{n-2} +\frac{k-1}{n-2} 
D_{\hat{1}} - D_{\hat{i}}.$$ 

We can easily prove that $$ a \leq x_i$$ 
for any $i=k+1,..., n$. Besides observe that $a$ 
is positive by assumptions (i) and (ii) (and then also the $ x_i$ are positive).

Let ${\cal G}$ be the weighted graph defined in the following way:
consider the complete graph with vertices $1,...,k$ 
and weights of the edges equal to $a$ and then, for any $i=k+1,..., n$, 
 draw an edge joining $i$ and $1$ and an edge joining $i$ and $k$, both
with weight $x_i$. 

Since $a \leq x_i$ for any $i=k+1,...,n$, we get: 
$$ D_{\hat{1}} ({\cal G})=.....= D_{\hat{k}} ({\cal G}) =
 (k-2)a + x_{k+1}+.....+ x_n =  D_{\hat{1}}=.....= D_{\hat{k}}   $$ 
and, for any $i=k+1,...,n$, we have:
$$ D_{\hat{i}} ({\cal G})=
 (k-1)a + \sum_{j=k+1,..., n,\; j\neq i}
x_{j} =  D_{\hat{i}} . $$

2) We prove the statement by induction on $n$. Precisely 
we prove,  by induction on $n$ (with $n=4$ as base case), that 
if (i) holds  and the maximum in $\{D_{\hat{i}}\}_{i \in [n]}$ 
is acheived only once, then there exists a weighted graph ${\cal G}= 
(G,w)$ with exactly $n$ vertices such that: 

\begin{itemize}
\item[-]
$D_{\hat{j}} = D_{\hat{j}} ({\cal G})$  for any $j=1,...,n$

\item[-] if $ D_{\hat{i}}({\cal G})$ is the 
maximum in $\{D_{\hat{j}}\}_{j \in [n]}$ ,  then 
any subgraph realizing $ D_{\hat{i}} ({\cal G})$
has necessarily $i$ as vertex, so in particular $D_{1,...,n} ({\cal G})
= D_{ \hat{i}}({\cal G})$.
\end{itemize}

  Let $n=4$. Suppose that $D_{\hat{1}} > D_{\hat{2}},   D_{\hat{3}},   
D_{\hat{4}}$. 
Without loss of generality we can also suppose that  $D_{\hat{3}} \geq
   D_{\hat{2}}$.
Let ${\cal G}$ be the weighted graph shown in Figure \ref{fig:graph2dis}.

\begin{figure}[!h]
\centering
\input{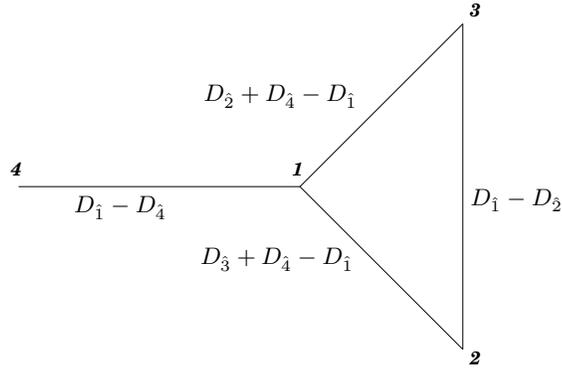}
\caption{Graph with $3$-dissimilarity vector $
(D_{\hat{1}}, D_{\hat{2}},   D_{\hat{3}},  D_{\hat{4}})$ \label{fig:graph2dis}}
\end{figure}

Observe that it is positive weighted, in fact: 
obviously $ D_{\hat{1}} - D_{\hat{4}} >0$ and
$ D_{\hat{1}} - D_{\hat{2}} >0$; besides
$$ D_{\hat{3}} + D_{\hat{4}} -  D_{\hat{1}} >0,$$ because 
$2  D_{\hat{1}} \leq D_{\hat{2}} + D_{\hat{3}}  + D_{\hat{4}}$ by 
(\ref{n-2}) and $D_{\hat{1}} > D_{\hat{2}}$; analogously 
$$ D_{\hat{2}} + D_{\hat{4}} -  D_{\hat{1}} >0.$$

Furthermore we can see easily that:

$ w(e(2,3)) \leq w(e(1,2))$,

$ w(e(1,3)) \leq w(e(1,2))$,

$w(e(1,3)) + w(e(2,3)) \geq w(e(1,2)) $, 

$w(e(1,2)) + w(e(2,3)) \geq w(e(1,3)) $.

Thus we get:

$D_{\hat{1}}({\cal G})= w(e(1,4)) + w(e(1,3)) + w(e( 2,3)) = D_{\hat{1}}$,
 
$D_{\hat{2}}({\cal G})= w(e(1,4)) + w(e(1,3)) =  D_{\hat{2}}$,

$D_{\hat{3}}({\cal G})= w(e(1,4)) + w(e(1,2)) =  D_{\hat{3}}$,

$D_{\hat{4}}({\cal G})= w(e(1,3)) + w(e(2,3)) =  D_{\hat{4}}$.


Now we want to prove the induction step. Let $ n \geq 5 $. 
Without loss of generality we can suppose that
$$D_{\hat{1}}  > D_{\hat{n}} \geq D_{\hat{j}}$$
for  any $j=2,...,n-1$.

Let $x = D_{\hat{1}} - D_{\hat{n}}$ and define
$ \tilde{D}_{1,...,\hat{i},...., , n-1} $ ($ 
\tilde{D}_{\hat{i}}$ for short) for $i=1,..., n-1$ in the following way:
 
$$ \tilde{D}_{\hat{i}} = 
\left\{ 
\begin{array}{ll}
D_{\hat{n}} & \text{ for } i=1 \\
D_{\hat{i}} -x & \text{ for } i=2,...,n-1 
\end{array}
\right.
 $$

Observe that the $\tilde{D}_{\hat{i}} $
are positive, in fact the inequality 
$$ (n-2) D_{ \hat{1}} \leq  \sum_{j=2,..., n } D_{\hat{j}} $$
(which follows from (i)) and the inequalities
$$ D_{\hat{1}} > D_{ \hat{j}}$$ for any $j= 2,...,n $ (in particular for 
$j \neq 1,i,n$) imply that 
$$  D_{\hat{1}} < D_{\hat{n}} + D_{\hat{i}}. $$

Observe also that $ \tilde{D}_{\hat{1}} >\tilde{D}_{\hat{i}} $
for $i =2, ..., n-1$. Therefore also in the set 
$ \{\tilde{D}_{\hat{i}}\}_{i \in [n-1]} $, the maximum is acheived only once
(by $\tilde{D}_{\hat{1}} $).
Besides the $\tilde{D}_{\hat{i}} $ for  $i =1,...,n-1$
satisfy (\ref{n-2}) with $n-1$ instead of $n$, in fact:

obviously it suffices to prove (\ref{n-2}) when the first member is 
the maximum, 
that is $ (n-2) \tilde{D}_{\hat{1}} $, so it suffices to prove that 
$$ (n-3)  \tilde{D}_{\hat{1}} \leq 
\sum_{j=2,..., n-1} \tilde{D}_{\hat{j}},$$
which is equivalent to 
$$ (n-3)  D_{\hat{n}} \leq 
\sum_{j=2,..., n-1} D_{\hat{j}} - (n-2) D_{\hat{1}} + (n-2) D_{\hat{n}},$$
that is 
$$ (n-2)  D_{\hat{1}} \leq 
\sum_{j=2,..., n-1} D_{\hat{j}} + D_{\hat{n}},$$
which follows from (\ref{n-2}).

Therefore, by induction assumption, there exists a weighted 
 graph $\tilde{{\cal G}}= 
(\tilde{G}, \tilde{w}) $ with 
 vertices $1,..., n-1$ such that:
 $D_{\hat{i}}(\tilde{{\cal G}}) = \tilde{D}_{\hat{i}}$ for any $ i=1,...,n-1
$ and any subgraph realizing 
$ D_{\hat{1}} (\tilde{{\cal G}}) $
(which is the maximum of the 
$D_{\hat{i}} (\tilde{{\cal G}})$) has $1$ as vertex (so 
$ D_{\hat{1}} (\tilde{{\cal G}}) $ is equal to $ 
D_{1,...,n-1} (\tilde{{\cal G}})$).
Let ${\cal G}$ be the graph obtained from $\tilde{{\cal G}}$ 
by adding an edge $E$ incident with
$1$ with weight $x$; call $n$ the other end of $E$.

\begin{figure}[htbp]
\centering
\input{Gtilde.pstex_t}
\caption{${\cal G}$ and  $\tilde{{\cal G}}$. \label{fig:Gtilde}}
\end{figure}

Observe that 
any subgraph realizing $ D_{\hat{1}}({\cal G})$ 
has $1$ as vertex by the construction of ${\cal G}$.
We want to show that $D_{\hat{i}}({\cal G}) =D_{\hat{i}}$ for any $i=1
,..., n$.
Obviously, 
$$ 
D_{\hat{n}}({\cal G}) =
D_{1, ....,n-1} ({\tilde{\cal G}}) =  
D_{\hat{1}} ({\tilde{\cal G}}) = 
\tilde{D}_{\hat{1}}= 
D_{\hat{n}},
$$ 
$$ 
D_{\hat{1}}({\cal G}) = x+
D_{1, ....,n-1} ({\tilde{\cal G}}) =  
x+ D_{\hat{1}} ({\tilde{\cal G}}) = 
x+ \tilde{D}_{\hat{1}}= 
D_{\hat{1}}.$$ 
Besides, for $i=2,...,n-1$, 
$$ D_{\hat{i}}({\cal G}) = x + D_{\hat{i}} ({\tilde{\cal G}}) = 
x + \tilde{D}_{\hat{i}}= x+D_{\hat{i}}-x= D_{\hat{i} }.
$$

\hfill \framebox(7,7)


\bigskip

The case of graphs with internal vertices is more difficult: 
in this case condition~(\ref{n-2}) is not necessary: see 
\figurename~\ref{fig:examplenon2} for an example of a graph with four external vertices 
such that the $3$-dissimilarity vector 
doesn't satisfy  condition~(\ref{n-2}). Here we study the case with internal
 vertices only in the case $n=4$. It seems difficult
 to generalize the result to $n$ greater than $4$.

\begin{figure}[htbp]
\centering
\input{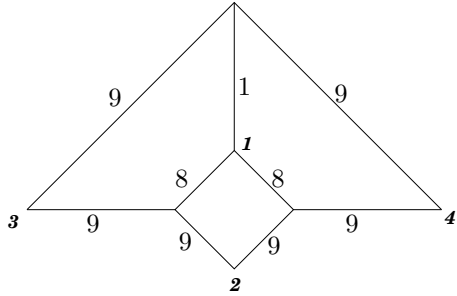}
\caption{Example of weighted graph such that 
 $2D_{\hat{1}}>D_{\hat{4}}+D_{\hat{3}}+D_{\hat{2}}$. \label{fig:examplenon2}}
\end{figure}

\begin{thm}\label{thm:morevertices}
Let $D_{1,2,3},D_{1,2,4},D_{1,3,4},D_{2,3,4} \in 
\mathbb{R}_{+}$; as usual we will denote them respectively by 
$D_{\hat{4}}, D_{\hat{3}} , D_{\hat{2}}, D_{\hat{1}}$. 
There exists a weighted graph ${\cal G}= (G,w)$ 
with $[4]$ subset of $V(G)$ 
such that $ D_{\hat{i}}({\cal G}) =D_{\hat{i}}$ for $i=1,2,3,4$
if and only if 

(i) $$
5D_{\hat{t}} \leq 3D_{\hat{k}}+ 3D_{\hat{j}} + 2D_{\hat{i}} 
$$
for any distinct  $i,j,k,t \in [4]$,

(ii) $$ D_{\hat{i}} <  D_{\hat{k}}+ D_{\hat{j}}$$ 
for any distinct  $i,j,k\in [4]$.
\end{thm}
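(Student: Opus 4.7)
The plan is to prove both implications: for necessity ($\Rightarrow$), extract (i) and (ii) from any realization ${\cal G}=(G,w)$; for sufficiency ($\Leftarrow$), construct an explicit realizing graph, case-splitting on the configuration of the $D_{\hat{\ell}}$'s.

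For (ii), I would first apply Remark~\ref{rem:triangleinequality} with $I=\{j,k,t\}$, $J=\{i,k,t\}$, $K=\{i,j,t\}$ to obtain the non-strict bound $D_{\hat{i}}\le D_{\hat{j}}+D_{\hat{k}}$, and then upgrade it to strict inequality by an overlap/cycle argument. Let $T_j,T_k$ be subtrees of $G$ realizing $D_{\hat{j}},D_{\hat{k}}$ respectively; both contain $i$ and $t$, so $T_j\cup T_k$ is a connected subgraph covering $\{i,j,k,t\}$. If $T_j$ and $T_k$ share at least one edge, then $w(T_j\cup T_k)<D_{\hat{j}}+D_{\hat{k}}$ (no double-counting), and $D_{\hat{i}}\le w(T_j\cup T_k)<D_{\hat{j}}+D_{\hat{k}}$. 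Otherwise the $i$--$t$ paths in $T_j$ and $T_k$ are edge-disjoint, so $T_j\cup T_k$ contains a cycle through $i$ and $t$; deleting any one edge of this cycle yields a strictly lighter connected subgraph still containing $\{j,k,t\}$, again giving $D_{\hat{i}}<D_{\hat{j}}+D_{\hat{k}}$.

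For (i) the plan is to exhibit five connected subgraphs of $G$ each containing $\{i,j,k\}$, whose edges collectively use each edge of $T_j$ (resp. $T_k$, $T_i$) with multiplicity at most $3$ (resp. $3$, $2$); summing the five bounds $D_{\hat{t}}\le w(\mathrm{subgraph}_r)$ then yields $5D_{\hat{t}}\le 3D_{\hat{j}}+3D_{\hat{k}}+2D_{\hat{i}}$. An easy start is the four pair-unions $T_j\cup T_k$ (taken twice), $T_i\cup T_j$, $T_i\cup T_k$, each containing $\{i,j,k,t\}\supset\{i,j,k\}$, which already give $4D_{\hat{t}}\le 3D_{\hat{j}}+3D_{\hat{k}}+2D_{\hat{i}}$. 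Improving the left-hand coefficient from $4$ to $5$ is the delicate step and the main obstacle of the forward direction; the idea is to shave each Steiner tree $T_x$ of its branch toward $t$ at the Steiner point (so that what remains still connects the other three terminals among themselves) and to reassemble the freed-up weight into a fifth connected subgraph for $\{i,j,k\}$, while respecting the multiplicity bounds $3,3,2$.

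For sufficiency I would first try the star realization with an internal center $o$ joined to each $\ell\in[4]$ by an edge of weight $w_\ell=\tfrac{1}{3}\bigl(\sum_{m\ne\ell}D_{\hat{m}}-2D_{\hat{\ell}}\bigr)$; a direct computation gives $D_{\hat{\ell}}({\cal G})=\sum_{m\ne\ell}w_m=D_{\hat{\ell}}$, and this is a legitimate positive-weighted graph iff $w_\ell>0$ for every $\ell$. When (i) and (ii) permit some $w_\ell\le 0$ (so that the star degenerates), I would switch to a different topology — for instance a triangle on three of the external vertices with a pendant edge handling the boundary case, or a graph with an internal vertex and a cycle in the style of Figure~\ref{fig:examplenon2} for the remaining cases — determining the edge weights as explicit linear combinations of $D_{\hat{1}},\ldots,D_{\hat{4}}$ and verifying positivity via (i) and (ii) through a case analysis on the ordering of the four values. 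The main obstacles are therefore the combinatorial assembly step in the proof of (i), and, symmetrically, identifying the correct alternative topology (and checking positivity of its edge weights) whenever the star realization fails.
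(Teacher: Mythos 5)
Your argument for the necessity of (ii) is complete and is in fact a cleaner route than the paper's: the paper decomposes the trees realizing $D_{\hat{j}}$ and $D_{\hat{k}}$ into branches and runs a case analysis over whether each has two or three leaves, whereas your dichotomy (either the two realizing trees share a positively-weighted edge, or their two edge-disjoint $i$--$t$ paths create a cycle from which an edge can be deleted while preserving connectivity) handles all cases uniformly. The two remaining pieces, however, are genuine gaps, and they sit exactly at the points you yourself flag as the main obstacles; neither is resolved by the ideas you sketch.

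For the necessity of (i), the four pair-unions only yield $4D_{\hat{t}}\le 3D_{\hat{j}}+3D_{\hat{k}}+2D_{\hat{i}}$, and ``shave each Steiner tree of its branch toward $t$ and reassemble'' does not close the argument as stated: since the total multiplicity budget on the right is $3+3+2=8$ spread over five subgraphs, at least some subgraphs must be strictly cheaper than a full pair-union, and one must control \emph{which} branch is shaved and by how much. The paper's proof does this by a comparison between branches of two \emph{different} Steiner trees: writing (for $t=1,k=4,j=3,i=2$) the realizing trees as branches $a_\bullet$, $b_\bullet$, $c_\bullet$, it first arranges $b_1\le a_1$ by swapping the roles of $3$ and $4$, then bounds $5D_{\hat{1}}$ by the five connected subgraphs of weights $D_{\hat{4}}+b_1+b_4$ (twice), $D_{\hat{3}}-b_1+a_2+a_3$, and $D_{\hat{2}}+b_1+b_2$ (twice); the hypothesis $b_1\le a_1$ is precisely what turns the surplus $3b_1+2b_2+2b_4+a_2+a_3$ into $2D_{\hat{3}}+D_{\hat{4}}$. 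Your sketch contains no analogue of this branch comparison, so the coefficient $5$ is not obtained. For sufficiency, the star works only when $2D_{\hat{\ell}}\le\sum_{m\ne\ell}D_{\hat{m}}$ for every $\ell$, and conditions (i)--(ii) do not imply this (that is the whole point of allowing internal vertices, cf.\ \figurename~\ref{fig:examplenon2}); the cases where the star fails are the substance of the theorem, and you leave them at the level of ``find the correct alternative topology.'' The paper resolves all of them at once with a single explicit graph on $\{1,2,3,4,P_1,P_2,P_3\}$, with $w(e(4,P_i))=h=\tfrac{1}{2}(D_{\hat{1}}+D_{\hat{2}}-D_{\hat{4}})$ and $w(e(i,P_j))=r_i$ for $j\ne i$, the $r_i$ being explicit linear combinations of the $D_{\hat{\ell}}$ whose positivity follows from (ii) and the ordering, and whose correctness (via $2h\ge r_3$) follows from (i). As it stands, your proposal proves the necessity of (ii) and the easy subcase of sufficiency, but not the necessity of (i) nor sufficiency in general.
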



\begin{rem}
Condition~(i) implies the triangle inequalities (that is
the inequalities $D_{\hat{i}} \leq D_{\hat{j}} + D_{\hat{k}}$ for any $i,j,k$ 
distinct in $[4]$), but not the strict triangle inequalities.
\end{rem}
\begin{proof}
Condition~(i) implies 
 $$\, D_{\hat{t}} \leq \frac{3}{5}D_{\hat{k}} + \frac{3}{5}D_{\hat{j}} + \frac{2}{5}D_{\hat{i}} \,$$ for any distinct $i,j,k,t \in [4],$ so we get:
$$5D_{\hat{t}} \leq 3D_{\hat{k}} + 3D_{\hat{j}} + 2D_{\hat{i}} \leq \quad \quad \quad \quad \quad$$
$$\quad \quad \quad \quad \quad \quad \quad \leq3D_{\hat{k}} + 3D_{\hat{j}} + 2\left( \frac{3}{5}D_{\hat{k}} + \frac{3}{5}D_{\hat{j}} + \frac{2}{5}D_{\hat{t}}\right)=$$
$$= \frac{21}{5}D_{\hat{k}} + \frac{21}{5}D_{\hat{j}} + \frac{4}{5}D_{\hat{t}}, \quad$$
hence $$\frac{21}{5} D_{\hat{t}}\leq \frac{21}{5} D_{\hat{k}}+ \frac{21}{5} D_{\hat{j}}.$$
Obviously condition~(i) does not imply the strict triangle inequalities: 
in fact, if we take $D_{\hat{4}}= D_{\hat{3}} = D_{\hat{1}} + D_{\hat{2}}$,
condition~(i) is satisified.
\end{proof}

{\em Proof of Theorem~\ref{thm:morevertices}.}
$\Rightarrow $
Let ${\cal G}=(G,w)$
be a weighted graph  and let $[4] \subset V(G)$. 
Let $D_{\hat{t}}  ({\cal G})= D_{\hat{t}} $  for any distinct $ t \in [4]$. 
The  subgraph realizing $D_{\hat{t}}({\cal G})$ 
 is necessarily a tree with two or 
three  leaves. For any $t\in [4]$,  we choose a subtree realizing  
$D_{\hat{t}}({\cal G}) $.


We call $a_{1},a_{2},a_{3}$  the weights of the 
branches of the subtree realizing $D_{\hat{4}}({\cal G}) $ if it is a tree with three leaves,
$1,2, 3$. If it is a tree with two leaves, for instance if it is a tree
with leaves $1$ and $3$, we call $ a_1$ the weight of the path between
$1$ and $ 2$, $a_3$ the weight of the path between
$2$ and $ 3$  and we set $a_2=0$ and analogously in the other cases (see 
\figurename~\ref{fig:a1,a2,a3}).

\begin{figure}[htbp]
\centering
\input{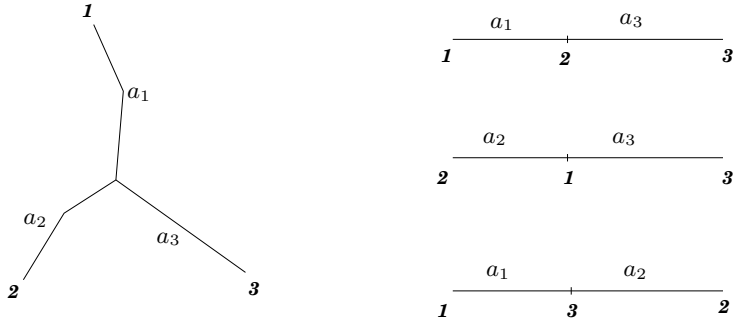}
\caption{$a_1, a_2, a_3$  \label{fig:a1,a2,a3} }
\end{figure}

In an analogous way we call  $b_{1},b_{2},b_{4}$  the weights of the branches of the subtree realizing $D_{\hat{3}} ({\cal G}) $, we call  $c_{1},c_{3},c_{4}$  the weights of the branches of the subtree realizing $D_{\hat{2}} ({\cal G})$,
 and, finally, we call $d_{2}, d_{3},d_{4}$  the weights of the branches of the subtree realizing $D_{\hat{1}}({\cal G}) $.  




To prove (i), up to permuting $1,2,3,4$, we can suppose $t=1, k=4, j=3,i=2$,   
so we have to prove that $$5D_{\hat{1}} \leq 3D_{\hat{4}}+3D_{\hat{3}}+2D_{\hat{2}}.$$

Up to swapping the vertices $4$ and $3$, 
we can suppose
$b_{1}\leq a_{1}$; thus we have:
$$ 5D_{\hat{1}} \leq (D_{\hat{4}} +b_{1}+b_{4})+(D_{\hat{4}}+b_{1}+b_{4})+(D_{\hat{3}}-b_{1}+a_{2}+a_{3})+$$ $$+(D_{\hat{2}}+b_{1}+b_{2})+(D_{\hat{2}}+b_{1}+b_{2}) =\quad \quad\quad \quad\quad \quad\quad \, \,\,$$
$$=2D_{\hat{4}}+D_{\hat{3}}+2D_{\hat{2}}+(3b_{1}+2b_{2}+2b_{4}+a_{2}+a_{3})\leq$$
$$\quad \quad \leq 2D_{\hat{4}}+D_{\hat{3}}+2D_{\hat{2}}+(2b_{1}+2b_{2}+2b_{4}+a_{1}+a_{2}+a_{3})=$$
$$= 3D_{\hat{4}}+3D_{\hat{3}}+2D_{\hat{2}}; \quad \quad\quad \quad\quad \quad\quad \quad \quad \quad \quad\quad \quad \,\,\,$$


Observe that all the inequalities hold also if some of the $a_i$ 
or some of the $b_i$ is zero. So we have proved (i).

To prove (ii),  up to permuting $1,2,3,4$,
 we can suppose   $i=4, k=1, j=2$. 
So  we have to prove that $$ D_{\hat{4}} < D_{\hat{1}} + D_{\hat{2}}.$$

If both the tree realizing $D_{\hat{1}} $ and the tree realizing
$D_{\hat{2}}$ have three leaves, we get:
$$  D_{\hat{4}} \leq d_2 + d_3 + c_3 + c_1 < D_{\hat{1}} + D_{\hat{2}}.$$

If  the tree realizing $D_{\hat{1}} $ has three leaves 
and the tree realizing
$D_{\hat{2}} $ has two leaves, we get:
$$  D_{\hat{4}} \leq d_2 + d_3 + c_3 + c_1 < D_{\hat{1}} + D_{\hat{2}}$$
if the leaves of the tree realizing $D_{\hat{2}}$ are $1,3$,
$$  D_{\hat{4}}  \leq d_2 + d_3 + c_3  < D_{\hat{1}} + D_{\hat{2}} $$
if the leaves of the tree realizing $D_{\hat{2}} $ are $4,3$,
$$  D_{\hat{4}}  \leq d_2 + d_3 + c_1 < D_{\hat{1}} + D_{\hat{2}}$$
if the leaves of the tree realizing $D_{\hat{2}}$ are $1,4$.

Analogously we can argue if  the tree realizing $D_{\hat{1}} $ has two leaves 
and the tree realizing $D_{\hat{2}}$ has three leaves.

Finally, if both the tree realizing $D_{\hat{1}}$ 
and the tree realizing $D_{\hat{2}}$ have two leaves, we get:
$$  D_{\hat{4}} \leq d_2 + d_3 + c_1 < D_{\hat{1}} + D_{\hat{2}}$$
if the leaves of the tree realizing $D_{\hat{2}}$ are $1,3$ and
the leaves of the tree realizing $D_{\hat{1}}$ are $2,3$
and analogously we can argue in the other cases.

$ \Leftarrow $  
We can suppose 
$D_{\hat{4}} \geq D_{\hat{3}} \geq D_{\hat{2}} \geq D_{\hat{1}}$.
Let $ {\cal G}=(G,w) $ be the weighted graph 
in \figurename~\ref{graphcase4},
  
\begin{figure}[htbp]
\centering
\input{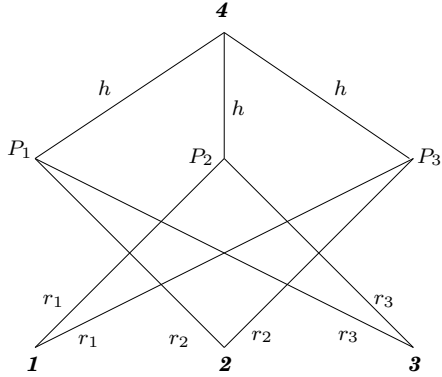}
\caption{A graph realizing 
$ (D_{\hat{1}}, D_{\hat{2}}, D_{\hat{3}}, D_{\hat{4}})$. 
\label{graphcase4}}
\end{figure}

where:

\medskip
 
$ h:= \frac{ D_{\hat{1}} +D_{\hat{2}}- D_{\hat{4}} }{2}$, 

\medskip

$r_1: = \frac{D_{\hat{4}} +D_{\hat{2}} + 2 D_{\hat{3}}  - 3 D_{\hat{1}} }{4} $,

\medskip

$r_2 := \frac{D_{\hat{4}} +D_{\hat{1}} + 2 D_{\hat{3}}  - 3 D_{\hat{2}} }{4} $,

\medskip

$r_3 := \frac{D_{\hat{4}} +D_{\hat{1}} +  D_{\hat{2}}  - 2 D_{\hat{3}} }{4} $,

\medskip

that is, $G$ is a graph with vertices $1,2,3,4, P_1, P_2 ,P_3$ and, for any 
$i \in [3]$,  
we join  the vertices $4$ and $P_i$ with an edge of weight $h$ and,
 for  any $i \in [3], j \in [3]-\{i\}$,  
we join the vertices $i$ and $P_j$  with an edge of weight $r_i$.

\medskip

From assumption (ii) and from the fact that $D_{\hat{4}} \geq D_{\hat{3}} \geq D_{\hat{2}} \geq D_{\hat{1}}$,
we get easily that  $h, r_1, r_2, r_3 $ are positive.
Besides observe that   $ 2h \geq r_3$ (by condition~(i)), hence: 

\medskip

$ D_{\hat{1}} ({\cal G}) = h + r_2 +  r_3 = D_{\hat{1}} $,

\medskip

$ D_{\hat{2}} ({\cal G}) = h + r_1 +  r_3 = D_{\hat{2}} $,

\medskip

$ D_{\hat{3}} ({\cal G}) = h + r_1 +  r_2 = D_{\hat{3}} $,

\medskip

$ D_{\hat{4}} ({\cal G}) = r_1 + r_2 + 2 r_3 = D_{\hat{4}} $.

\hfill \framebox(7,7)

{\small }

\bigskip

{\bf Address of both authors:}
Dipartimento di Matematica e Informatica ``U. Dini'', 
viale Morgagni 67/A,
50134  Firenze, Italia

{\bf
E-mail addresses:}
baldisser@math.unifi.it, rubei@math.unifi.it

\end{document}